\documentclass{amsart}

\usepackage{mathtools}
\usepackage{amsmath}
\usepackage{amssymb}
\usepackage{mathrsfs}
\usepackage{amsthm}
\usepackage{bbold}
\usepackage{esint}
\usepackage{todonotes}
\usepackage{enumerate}
\usepackage{bm}

%\allowdisplaybreaks

\newtheorem{Theorem}{Theorem}

\newtheorem{Lemma}[Theorem]{Lemma}
\newtheorem{Proposition}[Theorem]{Proposition}

\theoremstyle{definition}

\newtheorem{Remark}[Theorem]{Remark}
\numberwithin{Theorem}{section}                 

\newcommand{\norm}[1]{\left\|{#1}\right\|}

\renewcommand{\bar}{\overline}
\renewcommand{\epsilon}{\varepsilon}
\newcommand{\T}{\mathbb T}

\newcommand{\N}{\mathbb N}
\renewcommand{\P}{\mathbb P}
\newcommand{\E}{\mathbb E}

\newcommand{\R}{\mathbb R}

\newcommand{\1}{\mathbb 1}

\newcommand{\dual}[2]{\left\langle#1,#2\right\rangle}

\let \div \relax

\DeclareMathOperator{\div}{div}

\title[A dichotomy concerning Riesz transforms]{A dichotomy concerning uniform boundedness of Riesz transforms on Riemannian Manifolds}
\author{Alex Amenta}
\address{Delft Institute of Applied Mathematics \\ Delft University of Technology \\ P.O. Box 5031\\ 2600 GA Delft \\The Netherlands}
\email{amenta@fastmail.fm}
\author{Leonardo Tolomeo}
\address{School of Mathematics\\ The University of Edinburgh and Maxwell Institute for Mathematical Sciences\\ James Clerk Maxwell Building\\ Rm 5210 
The King's Buildings\\ Peter Guthrie Tait Road\\ 
Edinburgh\\ EH9 3FD\\ United Kingdom}
\email{L.Tolomeo@sms.ed.ac.uk}
\thanks{The first author was supported by the VIDI subsidy 639.032.427 of the Netherlands Organisation for Scientific Research (NWO). The second author was supported by the European Research Council (grant no. 637995 ``ProbDynDispEq") and by The Maxwell Institute Graduate School in Analysis and its Applications, a Centre for Doctoral Training funded by the UK Engineering and Physical Sciences Research Council (grant EP/L016508/01), the Scottish Funding Council, Heriot-Watt University and the University of Edinburgh.}
\subjclass{Primary: 42B20; Secondary: 58J35, 58J65}
\keywords{Riesz transform, Riemannian manifolds, Brownian motion}

\begin{document}

\begin{abstract}
  Given a sequence of complete Riemannian manifolds $(M_n)$ of the same dimension, we construct a complete Riemannian manifold $M$ such that for all $p \in (1,\infty)$ the $L^p$-norm of the Riesz transform on $M$ dominates the $L^p$-norm of the Riesz transform on $M_n$ for all $n$.
  Thus we establish the following dichotomy: given $p$ and $d$, either there is a uniform $L^p$ bound on the Riesz transform over all complete $d$-dimensional Riemannian manifolds, or there exists a complete Riemannian manifold with Riesz transform unbounded on $L^p$.
\end{abstract}

\maketitle

\section{Introduction}

Given a Riemannian manifold $M$, one can consider the Riesz transform $R := \nabla (-\Delta)^{\frac12}$, where $\nabla$ is the Riemannian gradient and $\Delta$ is the (negative) Laplace--Beltrami operator.
In the Euclidean case $M = \R^n$, this can be identified with the vector of classical Riesz transforms $(R_1,\ldots,R_n)$, as can be seen by writing $R$ as a Fourier multiplier (see \cite[\textsection 5.1.4]{lG14}).

It is easy to show that $R$ is bounded from $L^2(M)$ to $L^2(M;TM)$, and substantially harder to determine whether $R$ extends to a bounded map from $L^p(M)$ to $L^p(M;TM)$ for $p \neq 2$.
We let
\begin{equation*}
  R_p(M) := \sup_{\norm{f}_{L^p} \le 1} \norm{R(f)}_{L^p}
\end{equation*}
denote the (possibly infinite) $L^p$-norm of the Riesz transform on $M$.
Various conditions, often involving the heat kernel on $M$ and its gradient, are known to imply finiteness of $R_p(M)$; see for example \cite{AC05,ACDH04,BF16,gC17,CCH06,CCFR17,CD99,CD03,hL99,LZ17}.
These results usually entail finiteness of $R_p(M)$ for all $p \in (1,2)$, or for some range of $p > 2$.
On the other hand, there exist manifolds $M$ for which $R_p(M)$ is known to be infinite for some (or all) $p > 2$: see \cite{aA17, gC17, CCH06, CCFR17, CD99, hL99}.

\begin{Remark}
  When $M$ has finite volume we abuse notation and write $L^p(M)$ to denote the space of $p$-integrable functions \emph{with mean zero}.
  This modification ensures that $(-\Delta)^{-1/2}$ is densely defined.
  When $M$ has infinite volume, $L^p(M)$ denotes the usual Lebesgue space.
\end{Remark} %mk

The Euclidean case is now classical: for all $p \in (1,\infty)$ there is a constant $C_p < \infty$ such that $R_p(\R^n) \leq C_p < \infty$ for all $n \in \N$ (\cite{eS83}).
This behaviour is expected to persist for all complete Riemannian manifolds, at least for $p < 2$.
More precisely, in \cite{CD03} it is conjectured that for all $p \in (1,2)$ there exists a constant $C_p < \infty$ such that $R_p(M) \leq C_p$ for all complete Riemannian manifolds $M$.
Such uniform bounds have been proven for all $p \in (1,\infty)$ under curvature assumptions; rather than provide an overview of the vast literature on this topic we simply point to the recent paper \cite{DDS18} and references therein.

One could weaken the conjecture slightly and guess that $R_p(M)$ is finite for all $M$, given $p \in (1,2)$.
In this article we show that this can only hold if the bound is uniform among all manifolds of a fixed dimension.
This observation follows from the following dichotomy.

\begin{Theorem} \label{thm:main}
  Fix $d \in \N$ and $p \in (1,\infty)$.
  Then the following dichotomy holds: either
  \begin{itemize}
  \item there exists a constant $C_{p,d} < \infty$ such that $R_p(M) \leq C_{p,d}$ for all complete $d$-dimensional Riemannian manifolds $M$, or
  \item there exists a complete $(d+1)$-dimensional Riemannian manifold $M$ such that $R_p(M) = \infty$.
  \end{itemize}
\end{Theorem}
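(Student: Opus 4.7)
My plan is to prove the contrapositive. Suppose the first alternative fails: then there exists a sequence $(M_n)$ of complete $d$-dimensional Riemannian manifolds with $R_p(M_n) \to \infty$. It suffices to construct a single complete $(d+1)$-dimensional manifold $M$ whose Riesz transform $L^p$-norm dominates $R_p(M_n)$ for every $n$, since this at once forces $R_p(M) = \infty$, giving the second alternative.

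To build $M$, I would take a base manifold (for concreteness, $\R^{d+1}$) and a sequence of widely separated points $(q_n)$ inside it. At each $q_n$ I would attach a ``bubble'' modelled on $M_n \times [0,L_n]$ (or, if $M_n$ is non-compact, on an exhausting compact piece of $M_n$ crossed with an interval), glued to the base through a thin smooth cylindrical neck of radius $\epsilon_n$ and length $\ell_n$. Taking $\epsilon_n \to 0$ and $\ell_n \to \infty$ sufficiently quickly, and the points $q_n$ sufficiently separated, the resulting $(d+1)$-dimensional manifold $M$ is complete and contains a well-isolated isometric copy of an arbitrarily large compact piece of each $M_n$.

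The heart of the argument is then a quantitative comparison: for each $n$ and each $f \in L^p(M_n)$ supported in a fixed compact $K \subset M_n$, the lift $\tilde f$ of $f$ to the corresponding copy of $K$ in the $n$th bubble should satisfy
\[
\norm{R_M(\tilde f)}_{L^p(M)} \geq \norm{R_{M_n}(f)}_{L^p(M_n)} - \delta_n \norm{f}_{L^p(M_n)},
\]
with $\delta_n \to 0$. Since $f$ can be chosen so that $\norm{R_{M_n}(f)}_{L^p(M_n)}$ is arbitrarily close to $R_p(M_n) \norm{f}_{L^p(M_n)}$, this yields $R_p(M) \geq R_p(M_n) - o(1)$ and hence $R_p(M) = \infty$.

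The main obstacle is the non-locality of $R_M = \nabla(-\Delta_M)^{-1/2}$: a priori its value on the support of $\tilde f$ depends on the geometry of all of $M$. I would handle this via subordination,
\[
(-\Delta_M)^{-1/2} = \frac{1}{\sqrt{\pi}} \int_0^\infty t^{-1/2} e^{t\Delta_M} \, \d t,
\]
reducing the comparison of Riesz transforms to a quantitative comparison of heat semigroups on $M$ and on $M_n$. Through the probabilistic representation of heat kernels (consistent with the paper's keyword \emph{Brownian motion}), the discrepancy is controlled by the probability that a Brownian path starting in the bubble exits through the thin neck before a given time, and this can be made arbitrarily small on any fixed time window by choosing $(\epsilon_n, \ell_n)$ appropriately. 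The delicate part will be handling the large-$t$ tail of the subordination integral, where the exit probability need not be small; one likely needs to exploit either the vector-valued nature of $R$ (so that locally constant contributions are annihilated) or a careful cancellation/truncation argument near spatial infinity to push the estimate through.
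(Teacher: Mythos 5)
Your overall framework — contrapositive, gluing construction, subordination to heat flow, probabilistic exit estimates — matches the paper's in spirit, but two of the steps you flag as ``delicate'' or leave vague are precisely the genuine gaps, and the paper's way around them is worth contrasting with your plan.

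First, the large-$\sigma$ tail of the subordination integral. You correctly identify this as the sticking point: the exit probability through the neck is \emph{not} small for large times, so you cannot dominate $\sigma^{-1/2}\,|\langle e^{\sigma\Delta_M}\tilde f, G\rangle|$ by something integrable near $\sigma = \infty$ using only $\|f\|_{L^2}$. Your suggested fixes (vector-valued cancellation, truncation near infinity) are not developed, and it is not clear they would work. The paper's resolution is a density observation you are missing: since harmonic $L^{p'}$ functions on a complete manifold are constant (Yau), the set $\Delta(C_c^\infty)$ is dense in $L^p$. One may therefore assume $F = \Delta H$ with $H \in C_c^\infty$, which produces the extra power $\sigma^{-1}$ via $e^{\sigma\Delta}F = \sigma^{-1}e^{\sigma\Delta}(\sigma\Delta)H$ and $\|\sigma\Delta e^{\sigma\Delta}\|_{L^2\to L^2}\lesssim 1$, making $\min(\sigma^{-1/2},\sigma^{-3/2})$ the integrable dominating function. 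Without this (or an equivalent), the dominated convergence step fails.

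Second, your construction truncates each $M_n$ (or $M_n \times [0,L_n]$) to a compact piece, attached through a neck whose geometry depends on $n$. This introduces two problems not addressed in your sketch: (a) a truncated manifold-with-boundary must be capped or closed off, and (b) you then need $R_p(\text{truncated piece}) \to R_p(M_n)$ as $L_n \to \infty$, which is not obvious and is a separate convergence statement of the same difficulty as the one you are trying to prove. The paper avoids both issues by gluing the \emph{entire} non-compact cylinder $M_n \times \R$ into $M$ and using a translation parameter $s\to\infty$ along the $\R$-direction rather than a truncation parameter; the needed comparison is then $R_p(M\times\R)\ge R_p(M)$, which is established by a separate scaling argument (your plan has no analogue). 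You also do not need $\delta_n \to 0$ quantitatively: for each fixed $n$ and fixed compactly supported $F$, the limit $s\to\infty$ already gives $R_p(M) \ge \|R_{M_n\times\R}F\|_{L^p} - \epsilon$ for every $\epsilon$, which suffices.

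In short, the Brownian-motion heuristic and subordination are the right tools, but the argument as proposed is incomplete at exactly the two points it labels delicate: the density of $\Delta(C_c^\infty)$ (to tame the large-time tail) and the cylinder lemma plus translation (to avoid truncation and obtain a clean limit) are the missing ingredients.
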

This follows from the following proposition, which we prove by an explicit construction.
\begin{Proposition}\label{prop:construction}
  Fix $d \geq 1$, and let $(M_n)_{n \in \N}$ be a sequence of complete $d$-dimensional Riemannian manifolds.
  Then there exists a complete Riemannian manifold $M$ of dimension $d+1$ such that for all $p \in (1,\infty)$,
  \begin{equation*}
    R_p(M) \geq \sup_{n \in \N}R_p(M_n).
  \end{equation*}
\end{Proposition}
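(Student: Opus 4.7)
The plan is to construct $M$ by grafting cylinders $M_n \times \R$ onto a common backbone via thin tubes, and to show that each cylinder contributes its full share to $R_p(M)$. Two ingredients are needed: a cylinder lower bound of the form $R_p(N \times \R) \geq R_p(N)$ for any complete $d$-manifold $N$, and a localization argument showing that the Riesz transform on $M$, applied to a test function supported deep inside one of the embedded cylinders, reproduces the Riesz transform on that cylinder up to a small error.

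For the cylinder lower bound, given $f$ on $N$ such that $\norm{R f}_{L^p(N)}/\norm{f}_{L^p(N)}$ is close to $R_p(N)$ (or larger than any prescribed constant if $R_p(N)=\infty$), I would form $F_\lambda(x,s) := f(x)\phi(s/\lambda)$ on $N \times \R$, with $\phi$ a fixed smooth compactly supported nonzero bump. Fourier decomposition in $s$ turns $(-\Delta_{N\times\R})^{-1/2}$ into the spectral integral of $((-\Delta_N)+\xi^2)^{-1/2}$ against the Fourier mass of $\phi(\cdot/\lambda)$, which concentrates at $\xi=0$ as $\lambda \to \infty$. After rescaling $s \mapsto \lambda s$ to obtain the pointwise limit $\nabla_N(-\Delta_{N\times\R})^{-1/2}F_\lambda(x, \lambda s) \to (Rf)(x)\phi(s)$, and combining with the pointwise bound $|R F_\lambda| \geq |\nabla_N(-\Delta_{N\times\R})^{-1/2}F_\lambda|$ and the scaling $\norm{F_\lambda}_{L^p}^p = \lambda\,\norm{f}_{L^p(N)}^p\norm{\phi}_{L^p(\R)}^p$, this should yield $\liminf_{\lambda\to\infty}\norm{R F_\lambda}_{L^p(N\times\R)}/\norm{F_\lambda}_{L^p(N\times\R)} \geq \norm{R f}_{L^p(N)}/\norm{f}_{L^p(N)}$. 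The $L^p$ convergence step requires some care when $p \neq 2$, via control of the spectral multiplier $\sqrt{-\Delta_N}((-\Delta_N)+\xi^2)^{-1/2}$ as $\xi \to 0$.

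To construct $M$ I would pick basepoints $p_n \in M_n$, excise small balls $B_{r_n}(p_n, 0)$ from each $M_n \times \R$, and glue the resulting ends onto a backbone --- say $\R^{d+1}$ with countably many far-separated small disks removed --- via thin smooth tubes, producing a single complete $(d+1)$-dimensional Riemannian manifold $M$ in which each $M_n \times \R$ embeds isometrically on a ball $B_{R_n}(p_n, 0)$ minus a tiny cap. For each $\epsilon>0$ and each $n$, choose $f_n$ on $M_n$ realizing the ratio, $\lambda_n$ as in the cylinder step, and transplant $F_n := f_n(x)\phi(s/\lambda_n)$ into the $n$-th embedded cylinder of $M$. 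The main analytic step --- and the main obstacle --- is to show that $R_M F_n$ differs from the transplant of $R_{M_n \times \R} F_n$ by $o_\epsilon(1)$ in $L^p$. Using the heat-semigroup formula $R = c\int_0^\infty t^{-1/2}\nabla e^{t\Delta}\,dt$, I would split the time integral at $t \sim R_n^2$: for smaller $t$, Gaussian heat-kernel bounds (or equivalently a Brownian coupling, fitting with the probabilistic theme of the paper) give essentially exact agreement of the two heat kernels on $B_{R_n/2}(p_n, 0)$; for larger $t$, the contribution is controlled by the Brownian exit probability from $B_{R_n}$ together with the $L^p$ decay of $\nabla e^{t\Delta}F_n$, both of which can be made small by choosing $R_n$ large relative to $\lambda_n$ and $\norm{F_n}_{L^p}$. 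Taking $r_n, R_n$ and the tube geometry sufficiently extreme at each $n$ --- using a diagonal scheme across $p$ to handle all exponents simultaneously --- then yields $R_p(M) \geq R_p(M_n) - O(\epsilon)$ for every $n$ and every $p$, proving the proposition.
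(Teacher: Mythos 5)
Your overall architecture---cylinder lower bound $R_p(N\times\R)\geq R_p(N)$, gluing all the cylinders $M_n\times\R$ onto a common backbone, and a Brownian/heat-semigroup localization argument---is exactly the paper's, and the Brownian-coupling idea for the heat kernel comparison is the right one. However, there is a genuine gap in the way you parametrize the localization. You propose to control the error between $R_M F_n$ and the transplant of $R_{M_n\times\R}F_n$ ``by choosing $R_n$ large relative to $\lambda_n$ and $\norm{F_n}_{L^p}$,'' with $R_n$ a geometric parameter of $M$. But $M$ must be fixed once and for all, while the test functions $f_n$ and scales $\lambda_n$ depend on $\epsilon$, and in the unbounded case $R_p(M_n)=\infty$ they must degenerate as $\epsilon\to 0$. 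So for a fixed $R_n$ the error cannot be driven to zero uniformly in $\epsilon$, and the ``diagonal scheme'' does not resolve this because it still ties one geometric radius to a whole family of test functions. The paper's resolution is to keep the geometry completely fixed (any gluing works) and exploit the translation invariance of the embedded cylinder: one pushes a \emph{fixed} compactly supported $F=\Delta_{M_n\times\R}H$ out to $M_n\times(s,\infty)$ via $\tau_s$ and lets $s\to\infty$. Then the Brownian exit probability through the gluing region tends to $0$ for \emph{every} such $F$, with no dependence between the geometry and the test function.

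A second, related point: you aim to show $R_M F_n$ is close to the transplant of $R_{M_n\times\R}F_n$ \emph{in $L^p$}, and to that end you invoke ``$L^p$ decay of $\nabla e^{t\Delta}F_n$'' for large $t$. Such $L^p$ gradient bounds on the heat semigroup are not available on a general complete manifold---indeed they are essentially equivalent to what one is trying to prove about the Riesz transform. The paper sidesteps this entirely: it proves only \emph{distributional} convergence $\tau_{-s}i^*R_M(i_*\tau_sF)\to R_{M_n\times\R}F$, using the subordination formula $(-\Delta)^{-1/2}=\pi^{-1/2}\int_0^\infty\sigma^{-1/2}e^{\sigma\Delta}\,\d\sigma$, dominated convergence in $\sigma$ (the choice $F=\Delta H$ gives the needed $\sigma^{-3/2}$ decay, and only $L^2$ bounds on $e^{\sigma\Delta}$ are used), and then lower semicontinuity of the $L^p$-norm under distributional convergence to get the $\liminf$ inequality. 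The same remark applies to your cylinder step: the spectral-multiplier control of $\sqrt{-\Delta_N}((-\Delta_N)+\xi^2)^{-1/2}$ on $L^p$ that you flag as delicate is unnecessary once you settle for distributional convergence and a $\liminf$, which is exactly how Lemma~\ref{lem:cyl-R} proceeds. With these two adjustments---translate the test function rather than inflate the geometry, and prove distributional rather than $L^p$ convergence---your proposal becomes the paper's proof.
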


The main implication of Theorem \ref{thm:main} is as follows: to construct a manifold $M$ for which $R_p(M) = \infty$ for some $p \in (1,2)$, it suffices to construct a sequence $(M_n)_{n \in \N}$ of manifolds of equal dimension such that $R_p(M_n) \to \infty$ as $n \to \infty$.
Thus one is led to consider lower bounds for $L^p$-norms of Riesz transforms.
These seem not to have been considered in the literature, excluding of course the well-known computation of the $L^p$-norm of the Hilbert transform (the Riesz transform on $\R$) \cite{sP72}.
We hope that our contribution will provoke further interest in such lower bounds.

\section{Preliminary lemmas}

We begin with some basic lemmas.
The first says that the range of the Laplace-Beltrami operator is dense in $L^p$, and the second relates the Riesz transform on a manifold $M$ with that on the $M$-cylinder $M \times \R$.
These cylinders play a key role in the proof of our main theorem.

\begin{Lemma}\label{lem:range-lap-dens}
Let $M$ be a complete Riemannian manifold.
Then the set $S := \Delta(C_c^\infty(M))$ is dense in $L^p(M)$ for all $p \in (1,\infty)$ (recalling that we write $L^p(M)$ for the space of $p$-integrable mean zero functions when $M$ has finite volume).
\end{Lemma}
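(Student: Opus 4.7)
The plan is a standard duality argument: $S = \Delta(C_c^\infty(M))$ is dense in $L^p(M)$ if and only if every element of the dual $L^{p'}(M)$ (with $1/p + 1/p' = 1$) annihilating $S$ represents the zero functional on $L^p(M)$. Pick such a $g \in L^{p'}(M)$; the annihilation reads
\[
\int_M g\,\Delta\phi\,\d V = 0 \quad\text{for all } \phi \in C_c^\infty(M),
\]
that is, $\Delta g = 0$ in the sense of distributions.

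The next step is to upgrade this to classical harmonicity and then to rigidity. Since $\Delta$ is elliptic with smooth coefficients in any local chart, interior elliptic regularity (Weyl's lemma on a Riemannian manifold) gives a smooth representative of $g$ with $\Delta g = 0$ pointwise on $M$. I would then invoke Yau's $L^q$-Liouville theorem: on a complete Riemannian manifold, any harmonic function in $L^q(M)$ with $q \in (1,\infty)$ is constant. (This follows from Yau's result that non-negative subharmonic functions in $L^q$ are constant, applied to $|g|$, which is subharmonic when $g$ is harmonic.) Setting $q = p'$, the annihilator of $S$ consists only of constants. When $M$ has infinite volume, a nonzero constant does not belong to $L^{p'}(M)$, so $g \equiv 0$. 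When $M$ has finite volume, $L^p(M)$ denotes the mean-zero subspace, and constants pair to zero against mean-zero functions, so $g$ again represents the zero functional. Either way $S$ is dense.

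The main obstacle is the appeal to the $L^q$-Liouville theorem, which is the only nontrivial analytic input. For a self-contained proof one can run a Caccioppoli-type estimate: choose cutoff functions $\chi_R$ supported in geodesic balls $B(x_0,2R)$, equal to $1$ on $B(x_0,R)$, with $|\nabla\chi_R|\leq C/R$, whose existence for all $R$ is guaranteed by completeness of $M$. Testing harmonicity against $|g|^{p'-2}g\,\chi_R^2$ (with a mild truncation in $g$ when $p'<2$ to avoid the singularity at $g=0$) and letting $R\to\infty$ forces $\nabla g \equiv 0$. This is routine for $p'=2$ and standard but more delicate for general $p' \in (1,\infty)$; for the present application, simply quoting Yau seems cleanest.
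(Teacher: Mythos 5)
Your proof is correct and takes essentially the same route as the paper: test against an annihilating $H\in L^{p'}(M)$, deduce that $H$ is harmonic, and invoke Yau's $L^q$-Liouville theorem (the paper cites \cite[Theorem 3]{sY76}) to conclude $H$ is constant, which then vanishes as a functional in both the finite- and infinite-volume cases. The extra remarks on Weyl's lemma and the Caccioppoli alternative are fine but not needed beyond what the paper does.
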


\begin{proof}
  Let $H \in L^{p'}(M)$ be such that $\dual{H}{F} = 0$ for every $F \in S$. Then $\dual{H}{\Delta G} = 0$ for every test function $G$, so $H$ is harmonic.
  By \cite[Theorem 3]{sY76}, it follows that $H$ is constant, and the result follows.
\end{proof}

\begin{Lemma}\label{lem:cyl-R}
  Let $M$ be a complete Riemannian manifold.
  Then
  \begin{equation*}
    R_p(M\times \R) \ge R_p(M).
  \end{equation*}
\end{Lemma}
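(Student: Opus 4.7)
The idea is to lift a near-extremal function for $R_p(M)$ to a slowly-varying function on $M\times\R$ whose Riesz transform effectively reduces to $R_M$ applied slicewise.
By Lemma \ref{lem:range-lap-dens} and the definition of $R_p(M)$, for every $\epsilon>0$ we may choose $f=-\Delta_M h$ with $h\in C_c^\infty(M)$ satisfying $\|R_M f\|_{L^p(M;TM)}\ge(R_p(M)-\epsilon)\|f\|_{L^p(M)}$.
Fix a nonzero $g\in C_c^\infty(\R)$ and set $F_\lambda(x,t):=f(x)g(t/\lambda)$.
Then $\|F_\lambda\|_{L^p(M\times\R)}^p=\lambda\|f\|_{L^p}^p\|g\|_{L^p}^p$, and the orthogonal decomposition $T(M\times\R)=TM\oplus T\R$ gives $|R_{M\times\R} F_\lambda|\ge|R_M' F_\lambda|$ pointwise with $R_M':=\nabla_M(-\Delta_{M\times\R})^{-1/2}$.
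Thus the task reduces to showing
\[
\liminf_{\lambda\to\infty}\lambda^{-1/p}\|R_M' F_\lambda\|_{L^p(M\times\R)}\ \ge\ \|R_M f\|_{L^p(M)}\|g\|_{L^p(\R)}.
\]

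Via the Fourier transform in $t$ (equivalently, the functional calculus of $-\partial_t^2$), followed by the substitutions $t=\lambda s$ and $\eta=\lambda\xi$, one checks that $\lambda^{-1/p}\|R_M' F_\lambda\|_{L^p(M\times\R)}=\|G_\lambda\|_{L^p(M\times\R)}$ where
\[
G_\lambda(x,s):=\frac{1}{2\pi}\int_\R \hat g(\eta)\,e^{i\eta s}\,\nabla_M(-\Delta_M+\eta^2/\lambda^2)^{-1/2}f(x)\,d\eta.
\]
Formally, $\nabla_M(-\Delta_M+\eta^2/\lambda^2)^{-1/2}f\to R_M f$ as $\lambda\to\infty$, so Fourier inversion gives the expected pointwise limit $G_\lambda\to R_M f\otimes g$; the conclusion would follow once this convergence is upgraded to $L^p(M\times\R)$.

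The main obstacle is precisely this last step. It reduces to proving that
\[
\nabla_M(-\Delta_M+\mu)^{-1/2}f\ \to\ R_M f\quad\text{in }L^p(M;TM)\text{ as }\mu\downarrow 0,
\]
with an envelope uniform in $\mu$ so that dominated convergence applies in the $\eta$-integral (using $\hat g\in\mathcal{S}(\R)$). The hypothesis $f=-\Delta_M h$ is crucial here: the operator identity
\[
(-\Delta_M+\mu)^{-1/2}f\ =\ (-\Delta_M+\mu)^{1/2}h\ -\ \mu\,(-\Delta_M+\mu)^{-1/2}h
\]
reduces the question to continuity of $\mu\mapsto\nabla_M(-\Delta_M+\mu)^{1/2}h$ at $\mu=0$ on the compactly supported smooth function $h$, which one obtains via heat-semigroup subordination together with completeness of $M$. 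Once $G_\lambda\to R_M f\otimes g$ in $L^p(M\times\R)$, the displayed lower bound is immediate and, letting $\epsilon\downarrow 0$, yields $R_p(M\times\R)\ge R_p(M)$.
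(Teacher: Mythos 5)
Your setup mirrors the paper's: both project $R_{M\times\R}$ onto the $TM$ summand, both use a tensor-product test function stretched in the $\R$-direction, and both rescale so that the relevant operator becomes $\nabla_M(-\Delta_M+\mu)^{-1/2}$ with a small parameter $\mu$. The divergence occurs at the final convergence step, and this is where your argument has a real gap.

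You identify the crux as upgrading the formal limit $G_\lambda\to R_Mf\otimes g$ to convergence in $L^p(M\times\R)$, and then propose to deduce it from $L^p(M;TM)$-convergence of $\nabla_M(-\Delta_M+\mu)^{-1/2}f$ as $\mu\downarrow 0$, via the identity $(-\Delta_M+\mu)^{-1/2}f=(-\Delta_M+\mu)^{1/2}h-\mu(-\Delta_M+\mu)^{-1/2}h$ and "heat-semigroup subordination together with completeness of $M$." This is not established and, as stated, is not a routine consequence of subordination. Unwinding the subordination formula, you would need control of $\|\nabla_M e^{\sigma\Delta_M}h\|_{L^p(M)}$ for large $\sigma$ that is uniform enough to dominate the $\sigma$-integral; for a general complete $M$ (precisely the regime where $R_p(M)$ could be large or infinite) no such bound is available, and indeed if $R_p(M)=\infty$ one cannot expect $R_Mf\in L^p$ in general, so $L^p$-convergence to it cannot be the right target. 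In short, the claimed reduction turns the lemma into an $L^p$ problem that is at least as hard as the Riesz bounds the lemma is meant to transfer.

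The missing idea is that $L^p$-convergence is unnecessary: a \emph{lower} bound $\liminf_\lambda\|G_\lambda\|_{L^p}\ge\|R_Mf\otimes g\|_{L^p}$ already follows from convergence \emph{in the sense of distributions}, by the weak-$*$ lower semicontinuity of the $L^p$-norm ($\|G\|_{L^p}=\sup_{\|\phi\|_{L^{p'}}\le1}|\langle G,\phi\rangle|$). The paper exploits exactly this: it proves that $(-\Delta_M-\lambda^2\partial_t^2)^{-1/2}(f\otimes\rho)\to(-\Delta_M)^{-1/2}f\otimes\rho$ in $L^2$ (trivially, via the joint functional calculus of the commuting operators $-\Delta_M$ and $-\partial_t^2$ applied to the bounded symbol $(x/(x+\lambda^2y))^{1/2}\to 1$), then applies $\nabla_M$ to obtain \emph{distributional} convergence of $\tilde R_\lambda(f\otimes\rho)$ to $R_Mf\otimes\rho$, and concludes by lower semicontinuity and density. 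If you replace your last two paragraphs with this observation — noting that your pointwise/Fourier computation of $G_\lambda$ combined with the $L^2$ spectral argument already gives distributional convergence — the proof closes cleanly and coincides with the paper's.
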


\begin{proof}
Consider the following modification of the Riesz transform on $M \times \R$:
$$\tilde R := \nabla_M (-\Delta_{M\times\R})^{-\frac12} = \nabla_M(-\Delta_M - \partial_t^2)^{-\frac12}.$$ 
This is just the projection of $R$ onto the first summand of the tangent bundle $T(M \times \R) = TM \oplus T\R$, so we have that
\begin{equation}\label{eqn:tilde-R-est}
  \| \tilde RF \|_{L^p} \le \| RF \|_{L^p}.
\end{equation}
Let $F \in C^\infty_c(M \times \R)$, and for all $\lambda > 0$ consider the function 
\begin{equation*}
  F_\lambda(x,t) := \lambda^{\frac1p} F(x,\lambda t),
\end{equation*}
which satisfies $\norm{F_\lambda}_{L^p(M\times\R)} = \norm{F}_{L^p(M\times \R)}$.
Rescaling the operator $\tilde R$ in the variable $t$, we define
\begin{equation*}
  \tilde R_\lambda := \nabla_M(-\Delta_M - \lambda^2 \partial_t^2)^{-\frac12},\
\end{equation*}
so that 
 \begin{equation}\label{eqn:tilde-R}
   \| \tilde R F_\lambda \|_{L^p} = \| \tilde R_\lambda F \|_{L^p}.
 \end{equation}
Now take $f \in C^\infty_c(M) \cap D((-\Delta_M)^{-\frac12})$  and $\rho \in C^\infty_c(\R)$ such that $\norm{\rho}_{L^p(\R)} = 1$, and consider the function $F(x,t) = f(x)\rho(t)$.
Since $\Delta_M$ and $\partial_t^2$ commute, and the function
\begin{equation*}
  G_\lambda(x,y) = \left(\frac{x}{x+\lambda^2 y}\right)^\frac12
\end{equation*}
is bounded by $1$ for $(x,y) > 0$, and $G_\lambda \to 1$ pointwise as $\lambda\to0$, we have
\begin{equation*}
  \lim_{\lambda\to0}(-\Delta_M - \lambda^2 \partial_t^2)^{-\frac12}F = \lim_{\lambda\to0}G_\lambda(-\Delta_M,-\partial_t^2) (-\Delta_M)^{-\frac12} f \otimes \rho = (-\Delta_M)^{-\frac12} f \otimes \rho
\end{equation*}
in $L^2$, and thus also as distributions.
Therefore $\tilde R_\lambda F \to R f \otimes \rho$ as distributions, and so
\begin{equation*}
  \liminf_{\lambda \to 0} \| \tilde R_\lambda F \|_{L^p(M\times \R)} \ge \norm{R f \otimes \rho}_{L^p(M\times \R)} = \norm{Rf}_{L^p(M)}.
\end{equation*}
Combining this with \eqref{eqn:tilde-R} and \eqref{eqn:tilde-R-est}, and the fact that $C^\infty_c(M) \cap D((-\Delta_M)^{-\frac12})$ is dense in $L^p(M)$,\footnote{This follows from the inclusion $D((-\Delta_M)^{-\frac12}) \supseteq D((-\Delta_M)^{-1})  \supseteq \Delta_M(C_c^\infty(M))$, which is dense by Lemma \ref{lem:range-lap-dens}. See also \cite[Lemma 2.2]{MR3445205}. Again, recall that $L^p(M)$ denotes the corresponding space of mean zero functions when $M$ has finite volume.} yields $R_p(M\times \R) \ge R_p(M)$. 
\end{proof}

\section{Proof of the main theorem}

In this section we carry out the construction that proves Proposition \ref{prop:construction}, which implies Theorem \ref{thm:main}.

Consider a sequence $(M_n)_{n \in \N}$ of complete $d$-dimensional Riemannian manifolds.
We will connect the $M_n$-cylinders $(M_n \times \R)_{n \in \N}$ along a $\T^d$-cylinder $\T^d \times \R$ as follows.\footnote{Of course, one could connect the $M_n$-cylinders to each other directly, without needing the $\T^d$-cylinder. This would work just as well.}
For each $n \in \N$ fix a coordinate chart $U_n \subset M_n \times (-1/2,1/2)$ and a small ball $B_n \subset U_n$.
Similarly, for each $n \in \N$ choose a small coordinate chart $U^\prime_n \subset \T^n \times \R$ such that the charts $(U^\prime_n)_{n \in \N}$ are pairwise disjoint, and a small ball $B^\prime_n \subset U^\prime_n$.
For each $n \in \N$, glue the manifold $(M_n \times \R) \setminus B_n$ to $(\T^n \times \R) \setminus B_n^\prime$ along the boundaries of $B_n$ and $B_n^\prime$; this is possible since both these balls are `Euclidean' balls sitting inside coordinate charts.
This results in a $C^0$-Riemannian manifold $(M,g^\prime)$, which is $C^\infty$ away from the set $\Sigma = \cup_n \partial B_n$ on which we glued the manifolds together.
Mollify the metric to get a $C^\infty$-Riemannian manifold $(M,g)$ such that $g = g^\prime$ away from the $\varepsilon$-neighbourhood of $\Sigma$ for some very small $\varepsilon$.
An artist's impression of this construction, with $M_n = S^1$ for each $n$, is shown in Figure \ref{fig:construction}.

\begin{figure}
  \centering
  \def\svgwidth{\columnwidth}
  \includegraphics[width=\textwidth]{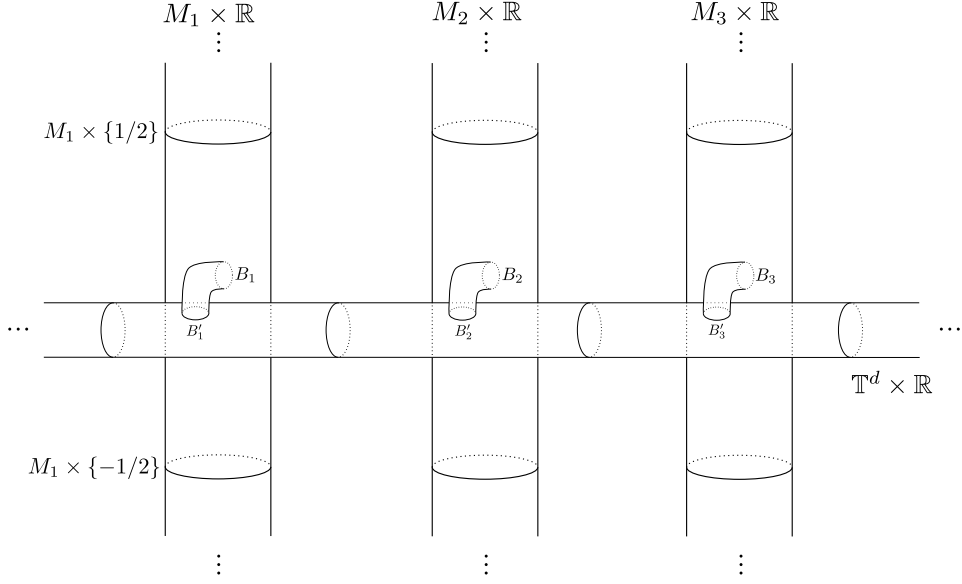}
  \caption{Construction of $M$ from $(M_n)_{n \in \N}$.}
  \label{fig:construction}
\end{figure}

For each $n \in \N$ we have an inclusion map
\begin{equation*}
  i_n \colon M_n \times (1,\infty) \to M
\end{equation*}
which is an isometry.
From here on we fix $n$ and just write $i = i_n$.
Functions on $M$ can be pulled back to $M_n \times (1,\infty)$; the pullback map is denoted $i^*$, so that for $f \colon M \to \R$ the function $i^* f \colon M_n \times (1,\infty) \to \R$ is defined by
\begin{equation*}
  i^*f(x,t) = f(i(x,t)).
\end{equation*}
On the other hand, for $g \colon M_n \times (1,\infty) \to \R$ we can define a pushforward $i_* g \colon M \to \R$ by setting $i_* g(i(x,t)) := g(x,t)$ on $i(M_n \times (1,\infty))$ and extending by zero to the rest of $M$.
For a function $g \colon M_n \times \R \to \R$ and for $s \in \R$ we let $\tau_s g \colon M_n \times \R \to \R$ be the translated function $\tau_s g(x,t) := g(x,t-s)$.
Similarly if $g \colon M_n \times (1,\infty) \to \R$ we can define $\tau_s g \colon M_n \times (1 + s, \infty) \to \R$.
These concepts apply equally well to vector fields in place of functions.

We will need the following lemma, which relates the heat flow on $M_n \times \R$ to the one on $M$.

\begin{Lemma} \label{heat_flow_convergence}
Let $F \colon M_n\times\R\to \R$ be smooth and compactly supported, and fix $\sigma>0$. Then for every $(x,t) \in M_n\times \R$,
\begin{equation*}
  \lim_{s\to +\infty} (e^{\sigma\Delta_M}i_*\tau_s F) (i(x,t+s)) = (e^{\sigma\Delta_{M_n\times\R}}F)(x,t).
\end{equation*}
\end{Lemma}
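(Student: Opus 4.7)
The plan is to use the Brownian-motion representation of the heat semigroup and couple Brownian motion on $M$ with Brownian motion on $M_n\times\R$ in the region where the inclusion $i$ is an isometry.

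Let $X$ denote Brownian motion on $M$ starting at $i(x,t+s)$, and let $Y$ denote Brownian motion on $M_n\times\R$ starting at $(x,t+s)$. Because $i$ is an isometry onto an open subset of $M$, one can realize $X$ and $Y$ on a common probability space so that $i(Y_u)=X_u$ for every $u$ strictly less than the common exit time
\begin{equation*}
T := \inf\{u\ge 0 : Y_u\notin M_n\times(1,\infty)\} = \inf\{u\ge 0 : X_u\notin i(M_n\times(1,\infty))\}.
\end{equation*}
Splitting $(e^{\sigma\Delta_M}i_*\tau_sF)(i(x,t+s)) = \E[(i_*\tau_sF)(X_\sigma)]$ on $\{T > \sigma\}$ versus $\{T\le\sigma\}$, the coupling yields $(i_*\tau_sF)(X_\sigma) = \tau_sF(Y_\sigma)$ on the good event, so
\begin{equation*}
\big|(e^{\sigma\Delta_M}i_*\tau_sF)(i(x,t+s)) - \E[\tau_sF(Y_\sigma)]\big| \le 2\|F\|_\infty\,\P(T\le\sigma).
\end{equation*}
By translation invariance of the heat kernel on $M_n\times\R$ in the $\R$-direction, $\E[\tau_sF(Y_\sigma)] = (e^{\sigma\Delta_{M_n\times\R}}\tau_sF)(x,t+s) = (e^{\sigma\Delta_{M_n\times\R}}F)(x,t)$, which is the desired right-hand side.

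It remains to show that $\P(T\le\sigma)\to 0$ as $s\to\infty$. Since $M_n\times\R$ is a Riemannian product, the $\R$-component of $Y$ is an independent standard $1$-dimensional Brownian motion starting at $t+s$, and $T$ coincides with the first hitting time of $1$ by this $1$-dimensional motion. By the reflection principle, $\P(T\le\sigma) \le 2\P(W_\sigma \ge t+s-1)$, where $W$ is a standard $1$-dimensional Brownian motion starting at $0$, and this clearly tends to $0$ as $s\to\infty$.

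The main obstacle is the rigorous construction and coupling of the Brownian motions, which must address the possibility that Brownian motion on a general complete Riemannian manifold may explode in finite time. This is handled by working with the minimal sub-Markov semigroup and its representation $(e^{\sigma\Delta}f)(y) = \E[f(X_\sigma)\1_{\sigma<\zeta}]$ in terms of the explosion time $\zeta$, and by constructing the coupling via standard techniques, for instance via horizontal lifts to the orthonormal frame bundle driven by the same Euclidean Brownian motion in the isometric region.
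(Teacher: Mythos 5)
Your proof is correct and follows essentially the same route as the paper's: represent the heat semigroup probabilistically, couple Brownian motion on $M$ with Brownian motion on $M_n\times\R$ in the region where $i$ is an isometry, split according to whether the exit time $T$ exceeds the heat time, bound the error by $2\|F\|_\infty\,\P(T\le\cdot)$, and use translation invariance in the $\R$-direction to identify the main term. Two small refinements on your side are worth noting: you derive $\P(T\le\sigma)\to 0$ quantitatively via the reflection principle applied to the $\R$-component (the paper merely invokes continuity of sample paths), and you explicitly flag the possible non-stochastic-completeness of $M$ and indicate the fix via the minimal sub-Markov semigroup, a point the paper's proof glosses over. Do keep the scaling convention consistent, though: if Brownian motion has generator $\tfrac12\Delta$ (the usual convention, and the one the paper uses), the representation is $e^{\sigma\Delta}f(y)=\E[f(X_{2\sigma})]$, so all your occurrences of $X_\sigma$, $Y_\sigma$, $W_\sigma$ should be $X_{2\sigma}$, $Y_{2\sigma}$, $W_{2\sigma}$ (or else declare the generator to be $\Delta$ throughout).
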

\begin{proof}
Let $W_{x,t}(\sigma)$ be a Brownian motion on $M_n\times\R$ at time $\sigma$ starting from the point $(x,t)$.
Since the generator $\frac12\Delta_{M_n\times\R}$ satisfies $\frac12i_*\circ\Delta_{M\times \R}|_{i(M_n\times (1,+\infty))} = \frac12\Delta_{M}|_{i(M_n\times (1,+\infty))}$, defining the stopping time 
$$T(x,t):= \inf\left\{s :  W_{x,t}(s) \in M_n \times (-\infty,1) \right\},$$
we have that $i(W_{x,t}(\sigma))$ is a Brownian motion on $M$ for $\sigma < T(x,t)$.
Therefore there exists a Brownian motion $\tilde W_{i(x,t)}(\sigma)$  on $M$ such that $\tilde W(\sigma) = i(W(\sigma))$ for $\sigma<T$; if $\bar W$ is a Brownian motion on $M$, we can take for example 
\begin{equation*}
\tilde W_{i(x,t)}(\sigma) = \left\{ 
\begin{aligned}
&i(W_{x,t}(\sigma)) &\text{if } \sigma < T, \\
&\bar W_{i(W_{x,t}(T))}(\sigma - T) &\text{if } \sigma\ge T.
\end{aligned}\right.
\end{equation*}
We have that
%{\color{red} (should you have $\tilde{W}$ in the second term on the third line, and in later lines? -aa)}
\begin{align*}
&(e^{\sigma\Delta_M}i_*\tau_s F) (i(x,t+s)) \\
&= \E[(i_*\tau_s F)(\tilde W_{i(x,t+s)}(2\sigma))]\\
&= \E[(i_*\tau_s F)(\tilde W_{i(x,t+s)}(2\sigma)) \1_{2\sigma<T}] + \E[(i_*\tau_s F)(\tilde W_{i(x,t+s)}(2\sigma))\1_{2\sigma\ge T}]\\
&= \E[(\tau_s F)(W_{x,t+s}(2\sigma))\1_{2\sigma<T}] +  \E[(i_*\tau_s F)(\tilde W_{i(x,t+s)}(2\sigma))\1_{2\sigma\ge T}]\\
& \begin{multlined}
=\E[(\tau_s F)(W_{x,t+s}(2\sigma))]  \\
- \E[(\tau_s F)(W_{x,t+s}(2\sigma))\1_{2\sigma\ge T}] +  \E[(i_*\tau_s F)(\tilde W_{i(x,t+s)}(2\sigma))\1_{2\sigma\ge T}]
\end{multlined}\\
&\begin{multlined}
= (e^{\sigma\Delta_{M_n\times\R}}\tau_sF)(x,t+s)\\
 - \E[(\tau_s F)(W_{x,t+s}(2\sigma))\1_{2\sigma\ge T}] +  \E[(i_*\tau_s F)(\tilde W_{i(x,t+s)}(2\sigma))\1_{2\sigma\ge T}].
\end{multlined}
\end{align*}
Therefore 
\begin{equation*}
 \left|(e^{\sigma\Delta_M}i_*\tau_s F) (i(x,t+s)) - (e^{\sigma\Delta_{M_n\times\R}}\tau_sF)(x,t+s)\right|
\le 2 \norm{F}_{L^\infty}\P(T(x,t+s)\le2\sigma).
\end{equation*}
Since $\Delta_{M_n\times \R}$ is translation invariant in the $\R$ coordinate, we have that 
\begin{align*}
\P(T(x,t+s)\le 2\sigma) &\le \P\big(\{W_{x,t+s}(\sigma') \in M_n\times (-\infty,1) \text{ for some }\sigma'\le2\sigma+1\}\big) \\
&=\P\big(\{W_{x,t}(\sigma') \in M_n\times (-\infty,1-s) \text{ for some }\sigma'\le2\sigma+1\}\big)
\end{align*}
and by continuity of $W_{x,t}(\cdot)$, this tends to $0$ as $s \to \infty$.
Thus we find that
\begin{equation*}
  \lim_{s\to +\infty} \bigg( (e^{\sigma\Delta_M}i_*\tau_s F) (i(x,t+s)) - (e^{\sigma\Delta_{M_n\times\R}}\tau_sF)(x,t+s) \bigg) = 0.
\end{equation*}
The conclusion follows from translation invariance of $\Delta_{M_n\times\R}$ in $\R$.
\end{proof}

We return to the proof of Proposition \ref{prop:construction}.
Fix $\epsilon > 0$, and choose $F = \Delta_{M_n \times \R} H$ for some $H \in C^\infty_c(M_n \times \R)$ with $\norm{F}_{L^p} = 1$ such that 
\begin{equation*}
  \norm{R_{M_n\times\R} F}_{L^p} \ge (R_p(M_n)-\epsilon) \wedge \epsilon^{-1}.
\end{equation*}
Such a function exists by Lemmas \ref{lem:range-lap-dens} and \ref{lem:cyl-R}.
We claim that
\begin{equation}\label{eqn:dist-limit}
  \lim_{s \to +\infty} \tau_{-s} i^*R_{M}(i_* \tau_s F) = R_{M_n\times \R} F
\end{equation}
as distributions.
Assuming \eqref{eqn:dist-limit} for the moment, we have
\begin{align*}
  \limsup_{s \to \infty} \norm{R_{M}(i_* \tau_s F)}_{L^p(M)} &\ge \limsup_{s \to \infty} \norm{i^*R_{M}(i_* \tau_s F)}_{L^p(M_n\times\R)}\\
                                                             &= \limsup_{s \to \infty} \norm{\tau_{-s}i^*R_{M}(i_* \tau_s F)}_{L^p(M_n\times\R)}\\
&\ge \norm{R_{M_n\times \R} F}_{L^p(M_n\times\R)} \ge R_p(M_n)-\epsilon,
\end{align*}
while for all $s \in \R$
$$\norm{i_* \tau_s F}_{L^p(M)} \leq \norm{\tau_s F}_{L^p(M_n\times\R)} = \norm{F}_{L^p(M_n\times\R)}\le 1. $$
The result follows, so it remains to prove \eqref{eqn:dist-limit}.

For $s$ sufficiently large, we have that 
$$i_*\tau_sF = i_*\tau_s(\Delta_{M_n \times \R} H)  = i_*(\Delta_{M_n \times \R}\tau_s H) = \Delta_M i_* \tau_s H,$$
therefore $i_*\tau_sF \in D(\Delta_M^{-1}) \subseteq D((-\Delta_M)^{-\frac12})$, and hence 
$$R(i_*\tau_sF) = \nabla \left((-\Delta)^{-\frac12}_Mi_*\tau_sF\right)$$
as a distribution. To test the distributional convergence, let $X$ be a smooth compactly supported vector field in
$M_n \times \R$.
For large $s$ we have that 
\begin{align*}
  \dual{\tau_{-s} i^*R_{M}(i_* \tau_s F)}{X} &= \dual{R_{M}(i_* \tau_s F)}{i_*\tau_sX} \\
                                             &= \dual{(-\Delta)^{-\frac12}_Mi_*\tau_sF}{\div(i_*\tau_sX)} \\
                                             &= \dual{(-\Delta)^{-\frac12}_Mi_*\tau_sF}{i_*\tau_s\div(X)}.
\end{align*}
Therefore it is enough to show that for every $G \in C^\infty_c(M_n\times \R)$, 
\begin{equation}\label{limit}
\lim_{s\to \infty}\dual{(-\Delta)^{-\frac12}_Mi_*\tau_sF}{i_*\tau_sG} = \dual{(-\Delta)^{-\frac12}_{M_n\times\R} F}{G}.
\end{equation}
By the well-known formula
$$(-\Delta)^{-\frac12} = \pi^{-\frac12} \int_0^{+\infty} \sigma^{-\frac12}e^{\sigma\Delta} \, d\sigma,$$
\eqref{limit} is equivalent to showing that 
\begin{equation}\label{eqn:limit-integrals}
  \lim_{s\to \infty}\int_0^{+\infty} \sigma^{-\frac12} \dual{e^{\sigma\Delta_M}i_*\tau_sF}{i_*\tau_sG} \, d\sigma
  = \int_0^{+\infty} \sigma^{-\frac12} \dual{e^{\sigma\Delta_{M_n\times\R}} F}{G} \, d\sigma.
\end{equation}
Note that  
\begin{equation*}
  \left|\sigma^{-\frac12} \dual{e^{\sigma\Delta_M}i_*\tau_sF}{i_*\tau_sG}\right| \le \sigma^{-\frac12} 
  \norm{i_*\tau_sF}_{L^2}\norm{i_*\tau_sG}_{L^2}\le \sigma^{-\frac12} \norm{F}_{L^2}\norm{G}_{L^2}\
\end{equation*}
and
\begin{equation*}
\resizebox{\textwidth}{!}{$\displaystyle
  \left|\sigma^{-\frac12} \dual{e^{\sigma\Delta_M}i_*\tau_sF}{i_*\tau_sG}\right| = \left|\sigma^{-\frac32} \dual{e^{\sigma\Delta_M}\sigma\Delta_Mi_*\tau_sH}{i_*\tau_sG}\right| \lesssim \sigma^{-\frac32} \norm{H}_{L^2}\norm{G}_{L^2}.$}
\end{equation*}
Since the function $\min(\sigma^{-\frac12},\sigma^{-\frac32})$ is integrable, by dominated convergence \eqref{eqn:limit-integrals} will be proved if we show
\begin{equation}\label{eqn:limit-integrand}
  \lim_{s\to \infty} \dual{e^{\sigma\Delta_M}i_*\tau_sF}{i_*\tau_sG} =  \dual{e^{\sigma\Delta_{M_n\times\R}} F}{G}
\end{equation}
for every $\sigma > 0$.
We show \eqref{eqn:limit-integrand} by writing
\begin{align*}
\lim_{s\to\infty}\dual{e^{\sigma\Delta_M}i_*\tau_sF}{i_*\tau_sG} & = \lim_{s\to\infty}\dual{\tau_{-s}i^*e^{\sigma\Delta_M}i_*\tau_sF}{G}\\
& = \lim_{s\to\infty} \int_{1-s}^{+\infty} \int_{M_n} (e^{\sigma\Delta_M}i_*\tau_s F) (i(x,t+s))G(x,t) \, dx \, dt \\
& = \int_\R \int_{M_n} (e^{\sigma\Delta_{M_n\times\R}} F)(x,t) G(x,t) \, dx \, dt\\
& = \dual{e^{\sigma\Delta_{M_n\times\R}} F}{G},
\end{align*}
using Lemma \ref{heat_flow_convergence} and dominated convergence (by $\norm{F}_{L^\infty} |G(x,t)|$).
This completes the proof of Proposition \ref{prop:construction}, and hence establishes Theorem \ref{thm:main}.

\footnotesize

\bibliographystyle{plain}

\bibliography{riesz_failure}

\begin{thebibliography}{10}

\bibitem{aA17}
A.~Amenta.
\newblock New {Riemannian} manifolds with {$L^p$}-unbounded {Riesz} transform
  for $p>2$.
\newblock arXiv:1707.09781, July 2017.

\bibitem{AC05}
P.~Auscher and T.~Coulhon.
\newblock Riesz transform on manifolds and {P}oincar\'e inequalities.
\newblock {\em Ann. Sc. Norm. Super. Pisa Cl. Sci. (5)}, 4(3):531--555, 2005.

\bibitem{ACDH04}
P.~Auscher, T.~Coulhon, X.~T. Duong, and S.~Hofmann.
\newblock Riesz transform on manifolds and heat kernel regularity.
\newblock {\em Ann. Sci. \'Ecole Norm. Sup. (4)}, 37(6):911--957, 2004.

\bibitem{BF16}
F.~Bernicot and D.~Frey.
\newblock Riesz transforms through reverse {H}\"older and {P}oincar\'e
  inequalities.
\newblock {\em Math. Z.}, 284(3-4):791--826, 2016.

\bibitem{gC17}
G.~Carron.
\newblock Riesz transform on manifolds with quadratic curvature decay.
\newblock {\em Rev. Mat. Iberoam.}, 33(3):749--788, 2017.

\bibitem{CCH06}
G.~Carron, T.~Coulhon, and A.~Hassell.
\newblock Riesz transform and {$L^p$}-cohomology for manifolds with {E}uclidean
  ends.
\newblock {\em Duke Math. J.}, 133(1):59--93, 2006.

\bibitem{CCFR17}
L.~Chen, T.~Coulhon, J.~Feneuil, and E.~Russ.
\newblock Riesz transform for {$1\le p\le 2$} without {G}aussian heat kernel
  bound.
\newblock {\em J. Geom. Anal.}, 27(2):1489--1514, 2017.

\bibitem{CD99}
T.~Coulhon and X.~T. Duong.
\newblock Riesz transforms for {$1\leq p\leq 2$}.
\newblock {\em Trans. Amer. Math. Soc.}, 351(3):1151--1169, 1999.

\bibitem{CD03}
T.~Coulhon and X.~T. Duong.
\newblock Riesz transform and related inequalities on noncompact {R}iemannian
  manifolds.
\newblock {\em Comm. Pure Appl. Math.}, 56(12):1728--1751, 2003.

\bibitem{DDS18}
K.~Dahmani, K.~Domelevo, and S.~Petermichl.
\newblock Dimensionless ${L^p}$ estimates for the {Riesz} vector on manifolds.
\newblock arXiv:1802.00366, February 2018.

\bibitem{MR3445205}
B.~Devyver.
\newblock A perturbation result for the {R}iesz transform.
\newblock {\em Ann. Sc. Norm. Super. Pisa Cl. Sci. (5)}, 14(3):937--964, 2015.

\bibitem{lG14}
L.~Grafakos.
\newblock {\em Classical {F}ourier analysis}, volume 249 of {\em Graduate Texts
  in Mathematics}.
\newblock Springer, New York, third edition, 2014.

\bibitem{hL99}
H.-Q. Li.
\newblock La transformation de {R}iesz sur les vari\'et\'es coniques.
\newblock {\em J. Funct. Anal.}, 168(1):145--238, 1999.

\bibitem{LZ17}
H.-Q. Li and J.-X. Zhu.
\newblock A note on ``{Riesz} transform for $1 \leq p \leq 2$ without
  {Gaussian} heat kernel bound''.
\newblock {\em J. Geom. Anal.}, Jun 2017.
\newblock Published online.

\bibitem{sP72}
S.~K. Pichorides.
\newblock On the best values of the constants in the theorems of {M}. {R}iesz,
  {Z}ygmund and {K}olmogorov.
\newblock {\em Studia Math.}, 44:165--179, 1972.

\bibitem{eS83}
E.~M. Stein.
\newblock Some results in harmonic analysis in {${\bf R}^{n}$}, for
  {$n\rightarrow \infty $}.
\newblock {\em Bull. Amer. Math. Soc. (N.S.)}, 9(1):71--73, 1983.

\bibitem{sY76}
S.~T. Yau.
\newblock Some function-theoretic properties of complete {R}iemannian manifold
  and their applications to geometry.
\newblock {\em Indiana Univ. Math. J.}, 25(7):659--670, 1976.

\end{thebibliography}

\end{document}